\documentclass[final,3p,times]{elsarticle}

\usepackage{amsmath,amssymb,amsfonts}
\usepackage{amsthm}
\usepackage[dvipsnames,table]{xcolor}
\usepackage{colortbl}
\usepackage{dsfont}
\usepackage{lmodern}
\usepackage{bbm}

\usepackage[labelfont=bf]{caption}
\usepackage{placeins}

\usepackage{tikz}
\definecolor{orange}{HTML}{d9944f}
\usetikzlibrary{patterns,arrows,positioning,fit}
\usetikzlibrary{decorations.pathreplacing}
\usetikzlibrary{decorations.pathmorphing}

\usepackage{graphicx,subcaption,lipsum}
\usepackage{url}
\usepackage{doi}
\usepackage{hyperref}
\usepackage[capitalize]{cleveref}
\usepackage{float}
\usepackage{orcidlink}
\usepackage{booktabs}

\usepackage{multirow}
\usepackage{multicol}

\usepackage{amsthm} 
\newtheorem{theorem}{Theorem}[section]

\newtheorem{definition}[theorem]{Definition}
\newtheorem{assumption}[theorem]{Assumption}
\newtheorem{remark}[theorem]{Remark}

\usepackage{float}
\usetikzlibrary{arrows.meta, positioning, calc}

\usepackage{array}
\usepackage{tabularx}

\newcolumntype{L}[1]{>{\raggedright\arraybackslash}p{#1}}
\newcolumntype{D}{>{\hsize=2.0\hsize\raggedright\arraybackslash}X}
\newcolumntype{V}{>{\hsize=0.70\hsize\centering\arraybackslash}X}

\newcommand{\rowrule}{\cmidrule(lr){1-6}}

\usepackage{lineno}

\usepackage[figuresright]{rotating}
\usepackage{orcidlink}

\usepackage[ruled,vlined]{algorithm2e}

\usepackage{todonotes}
\newboolean{SHOWCOMMENTS}
\setboolean{SHOWCOMMENTS}{true}
\ifthenelse{\boolean{SHOWCOMMENTS}}{
\newcommand{\tdHZ}[1]
{\todo[color=green!25,inline]{\footnotesize{\bf Henrik:} #1}}
}{
\newcommand{\tdHZ}[1]{}
}

\ifthenelse{\boolean{SHOWCOMMENTS}}{
\newcommand{\tdMK}[1]
{\todo[color=blue!25,inline]{\footnotesize{\bf Martin:} #1}}
}{
\newcommand{\tdMK}[1]{}
}

\renewcommand{\d}[1]{\mathrm{d}#1}

\newcommand{\vismode}{showpapertwo}
\newcommand{\papertwo}[1]{%
  \ifx\vismode\showpapertwo
  \else
    {\color{blue} #1}%
  \fi
}

\begin{document}

\begin{frontmatter}

\title{Efficient numerical computation of traveler states in explicit mobility-based metapopulation models: Mathematical theory and application to epidemics}

\author[a]{Henrik Zunker\orcidlink{0000-0002-9825-365X}\corref{cor1}}
\cortext[cor1]{Corresponding author}
\ead{Henrik.Zunker@DLR.de}
\author[a]{Ren\'e Schmieding \orcidlink{0000-0002-2769-0270}}
\author[b]{Jan Hasenauer\orcidlink{0000-0002-4935-3312}}
\author[a,b]{Martin J. Kühn\orcidlink{0000-0002-0906-6984}}
\ead{Martin.Kuehn@DLR.de}

\address[a]{Institute of Software Technology, Department of High-Performance Computing,
German Aerospace Center, Cologne, Germany}
\address[b]{Bonn Center for Mathematical Life Sciences and Life and Medical Sciences Institute, University of Bonn, Germany}

\begin{abstract}
Metapopulation models are powerful tools for capturing the spatio-temporal spread of infectious diseases. Models that explicitly account for traveler origins and destinations, such as Lagrangian metapopulation models, enable a detailed representation of mobility and traveling subpopulations. However, tracking these subpopulations requires a separate set of ordinary differential equations (ODEs) for each traveler group, leading to quadratic growth in system size with the number of spatial patches in densely connected networks. While specific approaches reducing the effort of traveler state estimation have been proposed, these approaches are either model-specific or heuristic and lack sufficient numerical accuracy.

Here, we introduce a Runge-Kutta (RK) stage-aligned computation of traveler states that leverages the precomputed intermediate stage values of explicit RK methods under the assumption of localized homogeneous mixing. We prove that the resulting numerical solution is identical to that of the standard Lagrangian formulation when solved with the corresponding RK method. For compartments without inflows, we further show that the exact same results can be obtained using a simple algebraic scaling based on the initial traveler share.

When embedded in a recently proposed metapopulation framework that combines local ODE dynamics on nodes with discrete mobility events along network edges, the stage-aligned approach eliminates the need for heuristic traveler approximations. In contrast to the standard Lagrangian formulation, the resulting method enables efficient simulations by reducing the global ODE system to linear scaling in the number of patches, while the remaining quadratic interactions are handled through simple and highly efficient algebraic updates for traveler estimation. Numerical experiments confirm the theoretical results, demonstrating optimal convergence order and good scalability with respect to the number of patches. Benchmarks on fully connected networks with up to 1025 patches, 1024 local travel connections, and six age groups achieve speedups of up to 76 and 50 for first- and fourth-order Runge–Kutta methods, respectively.
\end{abstract}

\begin{highlights}
    \item Introduction of a Runge-Kutta (RK) stage-aligned computation for traveler states in metapopulation models with explicit mobility.
    \item Theoretical proof of identical numerical results as in the standard Lagrangian metapopulation model.
    \item Reduction of ODE system size from quadratic to linear scaling with the number of patches and populations.
    \item Highly efficient computation of quadratically scaling interactions terms. 
    \item Numerical benchmarks show up to 76x (RK-1) and 50x (RK-4) speedups on fully coupled networks. 
\end{highlights}

\begin{keyword}
Metapopulation Model \sep Compartmental Model \sep Mobility \sep Runge-Kutta methods \sep MEmilio framework \sep Infectious disease modeling

\MSC[2020] 34-04 \sep 65L05 \sep 65Y20
\end{keyword}

\end{frontmatter}

\section{Introduction}
\label{sec:introduction}

Over the past decades, many methods have been proposed for capturing spatio-temporal dynamics of mobility and infectious disease spread. In agent-based models~\cite{kerr_covasim_2021,KERKMANN2025110269}, each individual is modeled as an agent with a potentially a large number of features. For large populations and with linear to quadratic complexity in the number agents, the computational effort of agent-based models can become considerable. A common and mathematically tractable alternative in spatial epidemic models is based on systems of ODEs, which remain popular due to their conceptual clarity and interpretability. In ODE-based metapopulation models, the total population is considered to be split over different patches; which can be communities, districts, regions, or entire states. Then, the spread of infectious disease within the individual patches is governed by a compartmental model, while mobility links the patches. Assessing and predicting the spread of infectious diseases in spatially structured populations is especially relevant when interventions are implemented regionally or when heterogeneous outbreak dynamics emerge across administrative units such as states, counties, or municipalities. By resolving disease dynamics across multiple interconnected patches, metapopulation models enable the explicit representation of spatial heterogeneity, regional differences in transmission conditions, and human mobility patterns. Beyond infectious disease epidemiology, metapopulation models have been applied for instance to analyze the behavior of multi-patch logistic population systems with migration~\cite{10.3934/dcdsb.2021025}, to study control and elimination strategies in spatially structured pest populations~\cite{BLIMAN2025116047}, and to derive optimal harvesting strategies in predator-prey metapopulation models~\cite{supriatna1999harvesting}.

For infectious disease dynamics, a large body of work has demonstrated that neglecting mobility or spatial structure can substantially bias outbreak size estimates and reproduction numbers, particularly in highly connected settings~\cite{wesolowski2012quantifying,bajardi2011human,balcan2009multiscale,kraemer2020mobility,doi:10.1073/pnas.0510525103}. When mobility acts as a driver of transmission or becomes a central part of the modeling task, it is important to choose the right metapopulation design with respect to the mobility processes. While some approaches model mobility only implicitly, including infected subpopulations from neighboring patches in the force of infection without physically relocating individuals~\cite{wang2018,liu_modelling_2022}, explicit mobility models physically transfer individuals between patches, thereby altering the local population states. We can distinguish two main approaches for explicit mobility: Eulerian and Lagrangian formulations~\cite{Citron2021}.

In Eulerian metapopulation models, individuals are transferred between patches via mobility and immediately become part of the destination patch's population; their origin is no longer tracked. Mobility is typically represented through continuous flows. If a single-patch model contains $N_C$ compartments and the system consists of $N_P$ patches, then the total number of ODE states scales linearly as $\mathcal{O}(N_P N_C)$. Origin--destination mobility flux data, representing average movements per unit time between patch pairs, are usually sufficient to parametrize such models~\cite{Citron2021,Peirlinck2020,MENG2022489}. While computationally efficient, Eulerian models do not distinguish residents from visitors once movement occurs, resulting in artificial mixing of populations.

In Lagrangian metapopulation models, individuals retain an association with their patch of origin while traveling~\cite{ArinoVdD2003a,COSNER2009550,Arino2015,Rapaport2022}. Accordingly, these models do not rely on the assumption that populations in the same patch share the same distribution of infection states, which allows for a more coherent description of the impact of mobility, e.g., traveling and commuting. Traveling subpopulations are tracked by introducing separate compartments for each ordered pair of origin and destination patches. Consequently, if patches are fully connected and each local model contains $N_C$ states, the total number of ODE states scales quadratically as $\mathcal{O}(N_P^2 N_C)$. Hence, while Lagrangian metapopulation models enable precise tracking of infection state changes under destination conditions, they also substantially increase dimensionality and computational cost.

Eulerian and Lagrangian metapopulation models can predict substantially different outbreak sizes and reproduction numbers~\cite{Citron2021, VargasBernal2022}. Since Lagrangian formulations explicitly track travelers experiencing local transmission conditions at their destinations, they are conceptually advantageous and can provide more realistic descriptions. However, to reduce the computational burden, various numerical approximations of their solutions have been proposed. In particular, operator splitting outside the main ODE integration~\cite{Lipshtat2021} and auxiliary computation steps~\cite{kuhn_assessment_2021} have been proposed to handle reverse traveling. These approaches yield approximations which scale only as $\mathcal{O}(N_P N_C)$ and are effective in certain regimes. Yet, they are often model dependent, may rely on restrictive step-size conditions, or remain largely heuristic.

In this work, we suggest an efficient approach leveraging explicit Runge-Kutta structures to compute states of traveling subpopulations on-the-fly with the numerical solution of a patch-aggregated system. We theoretically prove that the corresponding result is identical to the numerical solution of the standard Lagrangian system. While traveler computation of our novel approach also scales with $\mathcal{O}(N_P^2)$, the global numerical approach, with a reduced ODE system of size $\mathcal{O}(N_P)$, results in substantially reduced workload and thus runtimes when compared to the standard Lagrangian formulation.

\section{ODE-based compartmental models}
\label{sec:single}

The basis of our work are ODE-based compartmental models. In the simplest case, a single population is divided into $N_C$ different compartments with $N_C\in\mathbb{N}$. Let $x_k(t):\mathbb{R}\rightarrow \mathbb{R}_{\geq 0}$ denote the number of individuals in the $k$-th compartment at time $t$, also referred to as compartment size. Then the system state at time $t$ is given by the vector
\begin{align*}
    x(t) = (x_1(t), \dots, x_{N_C}(t))^T \in \mathbb{R}_{\ge 0}^{N_C}.
\end{align*}
Compartments may differ with respect to infection status (e.g., susceptible, exposed, infectious, recovered) as well as demographic attributes such as age or risk group. However, individuals within the same compartment are assumed to be indistinguishable.

The time evolution of the compartment sizes is determined by a set of $N_T\in\mathbb{N}$ transition processes. The effect of these transitions on the compartment vector is encoded in a matrix
\begin{align*}
B \in \mathbb{R}^{N_C \times N_T},
\end{align*}
whose columns represent the net change in compartments caused by a single transition. Specifically, if transition $r$ moves individuals from source compartment $j$ to target compartment $k$, then the $r$-th column of $B$ satisfies
\begin{align*}
B_{jr}=-1,\qquad B_{kr}=+1,
\end{align*}
and $B_{ir}=0$ for all $i\notin\{j,k\}$. The transition rates (flows) are collected in a vector
\begin{align*}
{f}(x(t),t) = \bigl(f_1(x(t),t), \dots, f_{N_T}(x(t),t)\bigr)^\top
\in \mathbb{R}_{\ge 0}^{N_T}.
\end{align*}
The rate of the $r$-th transition is assumed to take the form
\begin{align}
f_r(x(t),t) = x_{j(r)}(t)\,d_r(x(t),t),
\end{align}
where $j(r)$ is the index of the source compartment of the transition and $d_r(x,t)\ge 0$ is a continuous function. This assumption holds for a broad range of (epidemiological) models, including SIR- and SIS-type structures. Depending on the structure of $d_r$, we distinguish two types of processes:
\begin{itemize}
    \item \textbf{Interaction independent processes:} If the function $d_r(x,t)$ is independent of $x$, the transition depends only on the source compartment size (e.g., recovery, progression, waning).
    \item \textbf{Interaction dependent processes:} If the function depends on $x$, there is a direct or indirect interaction with other compartments (e.g., infection transmission).
\end{itemize}

Using the matrix $B$ and the flow vector $f$, we write the dynamics of compartmental models compactly as
\begin{align}
\label{eq:stoich_form}
\frac{\d x(t)}{\d t} = B\, f(x(t),t).
\end{align}
We assume that $f(\cdot,t)$ is sufficiently regular such that the initial value problem defined with~\cref{eq:stoich_form} is well posed and preserves nonnegativity of the state. Defining the matrix $D(x(t),t)\in\mathbb{R}^{N_T\times N_C}$ with entries
\begin{align}
\label{eq:matrix_D}
D_{rj}(x(t),t)
=
\begin{cases}
d_r(x(t),t), & \text{if } j=j(r),\\
0, & \text{otherwise},
\end{cases}
\end{align}
we can write~\cref{eq:stoich_form} as
\begin{align}
\label{eq:form_MDx}
\frac{\d x(t)}{\d t}=B \, D(x(t),t)\,x(t),
\end{align}
where $B \, D(x(t),t)\in\mathbb{R}^{N_C\times N_C}$.

\section{ODE-based Lagrangian metapopulation model}
\label{sec:metapop}
\subsection{Standard formulation with continuous mobility}
\label{sec:standard_lagrange}

We consider a spatial domain $\Omega \subset \mathbb{R}^2$ partitioned into $N_P$ patches. The population present in patch $p$ at time $t$ may consist of residents of patch $p$ (i.e., individuals whose home patch is $p$) as well as travelers from other patches. Following the Lagrangian metapopulation formulation, we denote by $x^{(p;q)}(t) \in \mathbb{R}_{\ge 0}^{N_C}$ the compartment sizes of individuals whose patch of residence is $p$ and who are currently located in patch $q$. Accordingly, the state of the resident population of patch $p$ is given by
\begin{align}
x^{(p)}(t) = \sum_{q} x^{(p;q)}(t),
\end{align}
while the total population currently present in patch $q$ is
\begin{align}
\widetilde{x}^{(q)}(t) = \sum_{p} x^{(p;q)}(t).
\end{align}
For convenience, we introduce the vector $\hat{x}^{(p)}(t)$, which fully characterizes the state of the residents of patch $p$,
\begin{align}
\label{eq:disagg}
\hat{x}^{(p)}(t)=
\begin{pmatrix}
    x^{(p;1)}(t)\\
    \vdots\\
    x^{(p;N_P)}(t)
\end{pmatrix}.
\end{align}

\begin{assumption}\label{ass:mixing}
For each compartment, we assume \textbf{homogeneous mixing}. That is, individuals in the same compartment are indistinguishable, and any effect acting on a subpopulation in this compartment affects all individuals identically.
\end{assumption}

Under Assumption~\ref{ass:mixing}, the dynamics of $x^{(p;q)}$ for any pair of patches $(p,q)$, representing the population living in $p$ and currently being in $q$, can be written as
\begin{align} \label{eq:MasterEquation}
\frac{\d x^{(p;q)}(t)}{\d t} = G^{(q)}(x^{(p;q)}(t), \widetilde{x}^{(q)}(t), t) + M^{(p,q)}(\hat{x}^{(p)}(t),t).
\end{align}
Here, $G^{(q)}:\mathbb{R}^{N_C}_{\geq 0}\times\mathbb{R}^{N_C}_{\geq 0}\times\mathbb{R} \to \mathbb{R}^{N_C}$ is the (local) transition operator and $M^{(p,q)}$ is the (local) mobility operator.

The transition operator $G^{(q)}$ describes state transitions occurring under the conditions in the current patch $q$. Under the homogeneous mixing assumption, flows scale linearly with the size of the source compartment and depend only on the characteristics of the total population currently present in patch $q$. The transition operator therefore takes the form
\begin{align}
\label{eq:DtoDp}
G^{(q)}(x^{(p;q)}(t), \widetilde{x}^{(q)}(t), t)
=
B\,D^{(q)}(\widetilde{x}^{(q)}(t),t)\,x^{(p;q)}(t).
\end{align}
The matrix $D^{(q)}(\widetilde{x}^{(q)}(t),t)$ collects the transition rates, which may vary between patches to account for patch-specific characteristics such as contact patterns or population density.

The mobility operator $M^{(p,q)}$ describes the travel of individuals residing in patch $p$. Travel rates from $p$ to $q$ are denoted by $m^{(p;p\to q)}(t)$ and return rates by $m^{(p;q\to p)}(t)$, allowing for varying rates according to the compartment of the individuals. With $\odot$ denoting element-wise multiplication, the mobility operator for $q \neq p$ is
\begin{align}
\label{eq:mob_pq}
M^{(p,q)} = m^{(p;p\to q)}(t) \odot x^{(p;p)}(t) - m^{(p;q\to p)}(t) \odot x^{(p;q)}(t),
\end{align}
where the first term represents the travelers arriving from $p$ in $q$ and the second term represents the travelers leaving $q$ for $p$. For $q=p$, i.e., the subpopulation $x^{(p;p)}$ living and staying in patch $p$, we have
\begin{align}
\label{eq:mob_pp}
M^{(p,p)} = - \sum_{q} m^{(p;p\to q)}(t) \odot x^{(p;p)}(t) + \sum_{q} m^{(p;q\to p)}(t) \odot x^{(p;q)}(t),
\end{align}
where the first sum represents all outbound and the second sum all inbound travelers, respectively. Summed over all contributions, we have mass conservation as given by
\begin{align}
M^{(p,p)}(t)
=
- \sum_{q \neq p} M^{(p,q)}(t).
\end{align}

We denote the ODE system of dimension $N_P^2\, N_C$ given by~\cref{eq:MasterEquation,eq:DtoDp,eq:mob_pq,eq:mob_pp}, which explicitly tracks the dynamics of traveling subpopulations between all pairs of patches, as the \textit{standard Lagrangian formulation} of metapopulation models.

\subsection{Standard formulation with discrete mobility}

In many applications, mobility occurs in discrete or nearly discrete events, such as daily commuting or scheduled travel. This motivates modeling travel as instantaneous exchanges between patches occurring at prescribed time points 
\begin{align}
    \label{eq:global_exchange_times}
    \begin{aligned}
        0 = t_0 < t_1 < t_2 < \dots.
    \end{aligned}
\end{align}
Travel and return rates are in this case given by sums of Dirac delta function, $\delta(t)$, yielding
\begin{align}\label{eq:discrete_mob}
m^{(p;p\to q)}(t) = \sum_{a\in\mathbb{N}_0} \mu^{(p;p\to q)}_a\delta(t-t_a) \quad\text{and}\quad
m^{(p;q\to p)}(t) = \sum_{a\in\mathbb{N}_0} \mu^{(p;q\to p)}_a\delta(t-t_a),
\end{align}
with $\mu^{(p;p\to q)}_a, \mu^{(p;q\to p)}_a\in[0,1]$ denoting the magnitude of instantaneous mobility exchange at $t_a\in\mathbb{R}_{\geq 0}$.

In the case of this discrete mobility formulation~\cref{eq:discrete_mob}, \cref{eq:MasterEquation} simplifies within the interval $I_a=(t_a,t_{a+1})$, $a\in\mathbb{N}_0$ to
\begin{align} \label{eq:MasterEquation_wo_mob}
\frac{\d x^{(p;q)}(t)}{\d t} = B\,D^{(q)}(\widetilde{x}^{(q)}(t),t) x^{(p;q)}(t).
\end{align}
As traveling subpopulations are explicitly described in~\cref{eq:MasterEquation_wo_mob}, the initial value problem which needs to be solved on $t\in I_a$ scales in densely connected system as $\mathcal{O}(N_P^2)$. 

\subsection{A graph approach for aggregated dynamics with decoupled traveler-state approximation}
\label{sec:graphode}

In order to avoid quadratically growing ODE systems, a graph-based approach with decoupled traveler-state estimation was suggested in~\cite{kuhn_assessment_2021} and extended in~\cite{zunker_novel_2024}. Here, the dynamics of the aggregated population in a patch $q$ (represented by a graph node) and solved with the system
\begin{align} \label{eq:graph_ode_2}
\frac{\d \widetilde{x}^{(q)}(t)}{\d t} = B\,D^{(q)}(\widetilde{x}^{(q)}(t),t) \widetilde{x}^{(q)}(t)
\end{align}
that only scales linearly with the number of patches. For an interval $I_a=(t_a,t_{a+1})$ of length $h$, traveler states are heuristically estimated through an auxiliary step with the explicit Euler method as
\begin{align}\label{eq:auxeuler}
x^{(p;q)}(t_{a+1}) = x^{(p;q)}(t_{a}) + h\,B\,D^{(q)}(\widetilde{x}^{(q)}(t_{a}), t_{a})x^{(p;q)}(t_{a}).
\end{align}
The heuristic in~\cref{eq:auxeuler} actually mean that the population living and staying at patch $q$ only serve as contact population without individual state transitions while traveler states are approximated with an first-order explicit Euler step. While the formulation in~\cref{eq:graph_ode_2} is attractive for its reduced system size, heuristic traveler state approximations yield particular problems when available subpopulations are overestimated and, generically, we can not expect~\cref{eq:auxeuler} to align with~\cref{eq:MasterEquation_wo_mob}.

\section{An efficient numerical approach for the computation of traveler states}
\label{sec:stage-aligned}

\subsection{Stage-aligned Runge-Kutta method}

Based on Assumption~\ref{ass:mixing} and the discrete mobility formulation, we now introduce a numerical approach for computing traveler states on-the-fly with a Runge-Kutta method applied to the patch-level aggregated dynamics. This approach combines the advantages of solving a system for the aggregated population in a patch as introduced by~\cite{kuhn_assessment_2021} (cf.~\cref{sec:graphode}) with the numerical computation of $x^{(p;q)}(t)$, entirely avoiding the explicit solution of the standard Lagrangian formulation of~\cref{eq:MasterEquation_wo_mob}. We consider a series of mobility event timings~\cref{eq:global_exchange_times} that generalizes the approach by~\cite{kuhn_assessment_2021} and also allows for flexible traveling and long-term travel between two different patches.

We first provide one property for the analytical solution of compartments without inflows, which will later be used to simplify the numerical scheme.

\begin{theorem}
\label{thm:pure_outflow}
Let Assumption~\ref{ass:mixing} hold true. Consider $t_a\in\mathbb{R}$ with the next mobility event timing at $t_{a+1}\in\mathbb{R}$ and a compartment $j \in \{1, \dots, N_C\}$ with no inflows. Then, the population share
\begin{align}
\label{eq:shares}
    \xi^{(p;q)}_j(t)=\frac{x^{(p;q)}_j(t)}{\widetilde{x}^{(q)}_j(t)}, \quad \text{with}\quad\xi_j^{(p;q)}(t)=0\quad\text{if}\quad\widetilde{x}^{(q)}_j(t)=0,
\end{align}
is constant for $t\in[t_a,t_{a+1})$ and it holds that
\begin{align}
\label{eq:computesharexpq}
    x^{(p;q)}_j(t)=\xi^{(p;q)}_j(t_a)\,\widetilde{x}^{(q)}_j(t).
\end{align}
\end{theorem}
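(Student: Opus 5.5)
On the open interval $I_a=(t_a,t_{a+1})$, \cref{eq:MasterEquation_wo_mob} governs every subpopulation, and we will compare the equations for $x^{(p;q)}_j$ and for the aggregate $\widetilde{x}^{(q)}_j$. Since compartment $j$ has no inflows, every nonzero entry of row $j$ of $B$ belongs to a transition $r$ with source $j(r)=j$, and then $B_{jr}=-1$. By the definition \cref{eq:matrix_D} of $D^{(q)}$, the $j$-th component of $B\,D^{(q)}(\widetilde{x}^{(q)}(t),t)\,y$ therefore depends only on $y_j$:
\begin{align*}
\bigl(B\,D^{(q)}(\widetilde{x}^{(q)}(t),t)\,y\bigr)_j = -\lambda^{(q)}_j(t)\,y_j,
\qquad
\lambda^{(q)}_j(t):=\sum_{r:\,j(r)=j} d^{(q)}_r(\widetilde{x}^{(q)}(t),t)\ge 0 .
\end{align*}
The crucial point is that $\lambda^{(q)}_j$ depends only on the total population $\widetilde{x}^{(q)}$ in patch $q$ (Assumption~\ref{ass:mixing}). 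It is the same function of time for all origins $p$, and it is continuous on $I_a$ because $d_r$ is continuous and the solution is continuous there.

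Hence $x^{(p;q)}_j$ satisfies the scalar linear ODE $\dot y=-\lambda^{(q)}_j(t)\,y$ on $I_a$. Summing over $p$, using linearity and $\widetilde{x}^{(q)}=\sum_p x^{(p;q)}$, shows that $\widetilde{x}^{(q)}_j$ satisfies the same ODE. The right-continuous state after the mobility event at $t_a$ serves as the initial value, and the identity extends to $t_a$ from the right. We then obtain the explicit solutions
\begin{align*}
x^{(p;q)}_j(t)=x^{(p;q)}_j(t_a)\,\Phi(t),\quad
\widetilde{x}^{(q)}_j(t)=\widetilde{x}^{(q)}_j(t_a)\,\Phi(t),\quad
\Phi(t)=\exp\Bigl(-\int_{t_a}^{t}\lambda^{(q)}_j(s)\,\d s\Bigr)>0 .
\end{align*}
Alternatively, one can differentiate the quotient $\xi^{(p;q)}_j$ directly and show that its derivative vanishes. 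We prefer the explicit form because it handles the zero case cleanly.

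For the case distinction, first suppose $\widetilde{x}^{(q)}_j(t_a)>0$. Then $\widetilde{x}^{(q)}_j(t)>0$ on $[t_a,t_{a+1})$ since $\Phi>0$, and the factor $\Phi(t)$ cancels in the quotient. This gives $\xi^{(p;q)}_j(t)=\xi^{(p;q)}_j(t_a)$, and multiplying back yields \cref{eq:computesharexpq}. Now suppose $\widetilde{x}^{(q)}_j(t_a)=0$. Nonnegativity of the state (assumed for the well-posed IVP) together with $\widetilde{x}^{(q)}_j=\sum_p x^{(p;q)}_j$ forces every $x^{(p;q)}_j(t_a)=0$. Then both $x^{(p;q)}_j$ and $\widetilde{x}^{(q)}_j$ vanish on the whole interval, so by the convention in \cref{eq:shares} the share is constantly $0$, and \cref{eq:computesharexpq} reads $0=0$.

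The main obstacle is the first step: verifying rigorously that the $j$-th row decouples, so that no term involving other components $y_k$ of the subpopulation appears. This needs the no-inflow hypothesis, to exclude entries $B_{jr}=+1$, and the source-linear structure $f_r=x_{j(r)}d_r$ with $d_r$ evaluated at the patch total rather than the subpopulation. Once that is established, the remainder is elementary linear ODE theory.
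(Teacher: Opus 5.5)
Your proof is correct, and its core is the same as the paper's. Because compartment $j$ has no inflows and the rates are evaluated at the patch total, both $x^{(p;q)}_j$ and $\widetilde{x}^{(q)}_j$ satisfy the same scalar linear ODE $\dot y=-\nu_j(t)\,y$, with a rate common to all origins $p$.

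Where you differ is the finish. The paper applies the quotient rule to show $\dot\xi^{(p;q)}_j=0$ wherever $\widetilde{x}^{(q)}_j>0$, and disposes of the zero case pointwise by the convention. You instead solve explicitly with the integrating factor $\Phi$. This costs a line or two, but it supplies two things the paper's argument leaves implicit:

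\begin{enumerate}
\item Since $\Phi>0$, $\widetilde{x}^{(q)}_j$ is either positive on all of $[t_a,t_{a+1})$ or identically zero there. So the quotient-rule region really is the whole interval, and the share cannot drop to the convention value partway through.
\item In the degenerate case, nonnegativity forces every $x^{(p;q)}_j\equiv 0$. Hence the identity $x^{(p;q)}_j(t)=\xi^{(p;q)}_j(t_a)\,\widetilde{x}^{(q)}_j(t)$ genuinely holds, rather than only the constancy of $\xi^{(p;q)}_j$.
\end{enumerate}

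Your remark that the values at $t_a$ must be read as the right-continuous, post-event state also addresses a point the paper does not discuss.
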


\begin{proof}
If $\widetilde x^{(q)}_j(t)=0$, the share is zero by definition. Assume $\widetilde x^{(q)}_j(t)>0$.
Based on the formulations in~\cref{eq:MasterEquation_wo_mob} and~\cref{eq:graph_ode_2}, the change in compartment $j$ is determined by the transitions connected to it. Since compartment $j$ has no inflows, only outflow transitions $r$ with source compartment $j(r) = j$ contribute.
Let $\nu_j(\widetilde{x}^{(q)}(t), t) = \sum_{r: j(r)=j} d_r(\widetilde{x}^{(q)}(t), t)$ denote the total outflow intensity per individual in compartment $j$. Thus, the derivatives for the aggregated population $\widetilde{x}^{(q)}$ and the subpopulation ${x}^{(p;q)}$ are
\begin{align*}
    \frac{\d \widetilde x^{(q)}_j(t)}{\d t} &= - \nu_j(\widetilde{x}^{(q)}(t), t) \, \widetilde x^{(q)}_j(t), \\
    \frac{\d x^{(p;q)}_j(t)}{\d t} &= - \nu_j(\widetilde{x}^{(q)}(t), t) \, x^{(p;q)}_j(t).
\end{align*}
Applying the quotient rule for the time derivative of $\xi^{(p;q)}_j(t)$ yields:
\begin{align*}
\frac{\d \xi^{(p;q)}_j(t)}{\d t}
&= \frac{\frac{\d}{\d t}(x^{(p;q)}_j(t))\,\widetilde x^{(q)}_j(t) - x^{(p;q)}_j(t)\,\frac{\d}{\d t}(\widetilde x^{(q)}_j(t))}  {\bigl(\widetilde x^{(q)}_j(t)\bigr)^2}\\
&=\frac{\bigl(- \nu_j(\widetilde{x}^{(q)}(t), t) x^{(p;q)}_j(t)\bigr)\widetilde x^{(q)}_j(t) - x^{(p;q)}_j(t)\bigl(- \nu_j(\widetilde{x}^{(q)}(t), t) \widetilde x^{(q)}_j(t)\bigr)}{(\widetilde x^{(q)}_j(t))^2} = 0.
\end{align*}
Hence, $\xi^{(p;q)}_j(t)$ is constant, specifically $\xi^{(p;q)}_j(t) = \xi^{(p;q)}_j(t_a)$.
\end{proof}

For any traveling subpopulation $x^{(p;q)}$ ($p \neq q$) present in patch $q$, all function evaluations $B\,D^{(q)}(\widetilde{x}^{(q)}(t),t)$ of a Runge-Kutta scheme applied to~\cref{eq:MasterEquation_wo_mob} act identically on all subpopulations (cf. homogeneous mixing assumption) and are obtained with the Runge-Kutta scheme applied to~\cref{eq:graph_ode_2}. Definition~\ref{def:adaptedrk} formalizes the Runge-Kutta \textit{stage-aligned} computation of traveler groups. \cref{fig:graph_ode_flow_2} visualizes the corresponding piecewise-continuous simulation scheme.

\begin{figure}[!t]
    \centering
    \includegraphics[width=\textwidth]{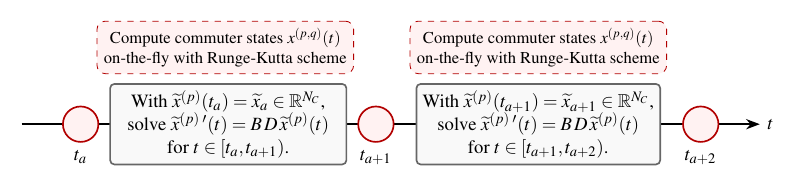}
    \caption{\textbf{Visual representation of the generalized piecewise-continuous simulation scheme.} During intervals $I_{a}$, the system evolves with all individuals in aggregated dynamics while traveler states are computed on-the-fly with the Runge-Kutta scheme to be used for the aggregated dynamics.}
    \label{fig:graph_ode_flow_2}
\end{figure}

\begin{definition}[Runge-Kutta stage-aligned traveler computation]
\label{def:adaptedrk}
Let an explicit single-step Runge-Kutta method with $S$ stages (denoted RK-$S$), $t_a\in\mathbb{R}$, step size $h>0$, and Butcher tableau coefficients $(a_{sr},b_s,c_s)$ for $s,r \in \{1,\dots,S\}$ be given. Assume that we have already precomputed (and stored) through the RK-$S$ scheme applied to the aggregated formulation the stage-based values
\begin{align}
\label{eq:precompxq}
\widetilde{x}^{(q),(s)}\quad\text{and}\quad D^{(q),(s)} = D(\widetilde{x}^{(q),(s)},\, t_a + c_s h).
\end{align}
We define the stage-aligned computation of traveling subpopulations $x^{(p;q)}$ for $p\in\{1,\ldots,N_P\}$ and $s\in \{1,\dots,S\}$ through the following steps.
\begin{enumerate}[i)]
    \item We compute the subpopulation stage state $x^{(p;q),(s)}$ using initial values and previous stage derivatives $k^{(p;q),(r)}$:
    \begin{align}
            \label{eq:rk_traveler_stages}
            x^{(p;q),(s)} = x^{(p;q)}(t_a) + h \sum_{r=1}^{s-1} a_{sr}\,k^{(p;q),(r)},
    \end{align}
    where for $s=1$ the sum is empty and $x^{(p;q),(1)} = x^{(p;q)}(t_a)$.
    \item Using the precomputed intensities of the sparse matrices $D^{(q),(s)}$, we compute the flows
    \begin{align}
    \label{eq:stage_f}
    f^{(p;q),(s)}=D^{(q),(s)}x^{(p;q),(s)}.
    \end{align}
    \item Taking the sparsity structure of $B$ into account, we compute the stage-specific derivatives determined by
    \begin{align}
        \label{eq:rk_stage_derivative}
        \begin{aligned}
        k^{(p;q),(s)} &= B \, f^{(p;q),(s)}.
        \end{aligned}
    \end{align}
\end{enumerate}

The final update is set as
\begin{align}
    \label{eq:rk_final_update}
    x^{(p;q)}(t_{a}+h) = x^{(p;q)}(t_a)+h\sum_{s=1}^S b_s\,k^{(p;q),(s)}.
\end{align}
\end{definition}

\begin{remark}
Note that the stage-aligned traveler-state computation can be given in two formulations, either by precomputation of $\widetilde{x}^{(q)}$ as in~\cref{eq:precompxq} and subsequent post-computing of all traveler groups as in~\cref{eq:rk_traveler_stages} or with the true on-the-fly computation of all traveler states inside the single adapted scheme, i.e., by computing $\widetilde{x}^{(q)}$ from~\cref{eq:rk_traveler_stages} and between~\cref{eq:rk_traveler_stages} and~\cref{eq:stage_f}. Note the formulation with precomputed aggregated values has, in the case of adaptive time stepping, the advantage that travelers are only computed if the time step size is not rejected, the formulation could theoretically lead to slightly different time steps as the time step acceptance criterion is evaluated solely on the aggregated values.
\end{remark}

\begin{theorem}
\label{thm:rk_discrete_equivalence}
Let Assumption~\ref{ass:mixing} hold true. Let the patch-aggregated populations be advanced by an RK-$S$ scheme with step size $h>0$, Butcher tableau coefficients $(a_{sr},b_s,c_s)$ for $s,r \in \{1,\dots,S\}$, and convergence order $q$. Then, for every step, the solution $x^{(p;q)}$ obtained from Definition~\ref{def:adaptedrk} is identical to the solution of the RK-$S$ scheme applied to the standard Lagrangian formulation in~\cref{eq:MasterEquation_wo_mob}. In particular, the solution from Definition~\ref{def:adaptedrk} has the same convergence order.
\end{theorem}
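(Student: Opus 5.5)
We prove the claim by induction over the stage index $s$ within a single step. The base of the outer induction over steps is that both procedures start the step on $I_a$ from the same values $x^{(p;q)}(t_a)$ for all $p,q$; hence $\widetilde{x}^{(q)}(t_a)=\sum_p x^{(p;q)}(t_a)$ also agrees. Throughout, the superscript $(s)$ denotes stage quantities, and we write $\bar{x}^{(p;q),(s)}$, $\bar{k}^{(p;q),(s)}$ for the stage values produced by the RK-$S$ scheme applied directly to the full Lagrangian system \cref{eq:MasterEquation_wo_mob}. That scheme produces
\begin{align*}
\bar{x}^{(p;q),(s)} = x^{(p;q)}(t_a) + h\sum_{r<s} a_{sr}\bar{k}^{(p;q),(r)},\qquad
\bar{k}^{(p;q),(s)} = B\,D^{(q)}\Bigl(\textstyle\sum_{p'} \bar{x}^{(p';q),(s)},\, t_a+c_s h\Bigr)\bar{x}^{(p;q),(s)}.
\end{align*}
The aggregated scheme for \cref{eq:graph_ode_2} produces $\widetilde{x}^{(q),(s)}$ and the stored matrices $D^{(q),(s)}$ of \cref{eq:precompxq}.

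The inductive claim, for each stage $s$, has two parts: (a) $x^{(p;q),(s)}=\bar{x}^{(p;q),(s)}$ for all $p$, and (b) $\sum_{p}\bar{x}^{(p;q),(s)}=\widetilde{x}^{(q),(s)}$. Part (b) is the key point, because it means the $D$-matrix used inside the Lagrangian scheme coincides with the precomputed $D^{(q),(s)}$. Given (a) and (b), the matrices agree, so by \cref{eq:stage_f} and \cref{eq:rk_stage_derivative} we get $k^{(p;q),(s)}=\bar{k}^{(p;q),(s)}$.

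Summing the Lagrangian stage derivatives over $p$ and using linearity of $x\mapsto B D^{(q),(s)}x$ gives $\sum_p \bar{k}^{(p;q),(s)} = B D^{(q),(s)}\widetilde{x}^{(q),(s)}$. This is exactly the stage derivative $\widetilde{k}^{(q),(s)}$ of the aggregated scheme. This linearity is where Assumption~\ref{ass:mixing}, via \cref{eq:DtoDp}, enters: $D^{(q)}$ depends only on the patch total and not on the subgroup. The inductive step then follows by summing \cref{eq:rk_traveler_stages} over $p$, which reproduces the aggregated stage formula for $\widetilde{x}^{(q),(s+1)}$ because the scheme is explicit and uses only $r<s+1$. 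Part (a) at stage $s+1$ is immediate from identical recursions with identical inputs. The case $s=1$ is trivial: both stage values equal the initial data, and the initial totals agree.

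Applying the same summation argument to the final update \cref{eq:rk_final_update} gives $x^{(p;q)}(t_a+h)=\bar{x}^{(p;q)}(t_a+h)$ for all $p,q$. It also shows that the aggregated RK update equals the sum of the traveler updates, so the starting data at the next step are again consistent; this carries the outer induction across steps and across mobility events. At a mobility event, both formulations apply the same instantaneous exchange to the same $x^{(p;q)}$. The convergence statement is then a corollary: the numerical solutions coincide exactly, so the global error with respect to the exact solution of \cref{eq:MasterEquation_wo_mob} is the error of RK-$S$ on that system, which has order $q$ under the regularity assumed for $f$.

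The main obstacle is making sure that the aggregated scheme is genuinely consistent with the sum of the subgroup schemes at every stage. This requires three facts: the aggregated initial value equals $\sum_p x^{(p;q)}(t_a)$ after each exchange; the stage times $t_a+c_sh$ and the step size $h$ are the same in both schemes; and no nonlinearity in $x^{(p;q)}$ itself enters $G^{(q)}$. I will state these explicitly as the hypotheses inherited from \cref{eq:DtoDp} and Definition~\ref{def:adaptedrk}. In the adaptive setting I will assume the same accepted step sequence in both schemes, as the Remark indicates.
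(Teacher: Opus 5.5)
Your proposal is correct and takes essentially the same route as the paper. Both argue by induction over the stages of a single RK step: the traveler stage values coincide, and summing them over origins reproduces the aggregated stage value $\widetilde{x}^{(q),(s)}$ by linearity of $x\mapsto B\,D^{(q),(s)}x$, so both schemes use the same rate matrix and hence the same stage derivatives. Your explicit outer induction over steps and mobility events, including the requirement that aggregated initial data stay consistent, is bookkeeping the paper leaves implicit, and it does not change the argument.
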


\begin{proof}
Consider a single step of the RK-$S$ method. Let $x^{(p;q)}_{\text{lagr}}$ denote the numerical solution obtained from the standard Lagrangian system in~\cref{eq:MasterEquation_wo_mob} and $x^{(p;q)}$ the stage-aligned traveler state computation from Definition~\ref{def:adaptedrk}. We show by induction over the stages $s=1,\dots,S$ that the additionally computed stage derivatives $k^{(p;q),(s)}$ match with the stage derivatives $k^{(p;q),(s)}_{\text{lagr}}$ of the standard Lagrangian formulation.

\underline{$s=1$:}
For an explicit Runge-Kutta scheme, the first stage corresponds to the state at $t_a$. Thus, $x^{(p;q),(1)} = x^{(p;q)}(t_a) = x^{(p;q),(1)}_{\text{lagr}}$.
The aggregated stage value is $\widetilde{x}^{(q),(1)} = \sum_{p} x^{(p;q)}(t_a) = \widetilde{x}^{(q)}(t_a)$.
The derivative in the stage-aligned computation is
\begin{align*}
    k^{(p;q),(1)} = B\, D(\widetilde{x}^{(q),(1)}, t_a) x^{(p;q),(1)}.
\end{align*}
For the standard Lagrangian system, the evaluation of the derivative at the initial stage ($s=1$) is determined by evaluating the transition rates using the sum of all subpopulations. Because both systems share the same initial conditions, this sum equals the aggregated stage, $\sum_p x^{(p;q),(1)}_{\text{lagr}} = \widetilde{x}^{(q),(1)}$. Thus, both approaches construct the exact same intensity matrix $D$, leading to identical subpopulation derivatives.
Applying the matrix representation to the Lagrangian system yields
\begin{align*}
    k^{(p;q),(1)}_{\text{lagr}} = B\, D(\widetilde{x}^{(q),(1)}, t_a) x_{\text{lagr}}^{(p;q),(1)},
\end{align*}
and, thus, $k^{(p;q),(1)} = k_{\text{lagr}}^{(p;q),(1)}$.

\underline{$s-1 \to s$:}
Assume $k^{(p;q),(r)} = k_{\text{lagr}}^{(p;q),(r)}$ for all $r < s$ and all subpopulations residing in $p\in\{1,\ldots,N_P\}$. 

First, we observe that the computed stage states $x^{(p;q),(s)}$ from \cref{eq:rk_traveler_stages} use the same linear combination of previous derivatives as the Lagrangian system's stage state $x_{\text{lagr}}^{(p;q),(s)}$. By the induction hypothesis, it follows $x^{(p;q),(s)} = x_{\text{lagr}}^{(p;q),(s)}$.

Next, we verify the consistency of the aggregated values. Summing all local contributions yields
\begin{align*}
    \sum_{p} x^{(p;q),(s)}
    = \sum_{p} \left( x^{(p;q)}(t_a) + h \sum_{r=1}^{s-1} a_{sr} k^{(p;q),(r)} \right).
\end{align*}
Due to the linearity of the system, the sum of the contributions $k^{(p;q),(r)}$ equals the evaluation $\widetilde{k}^{(q),(r)}$ of the aggregated system for $r\in\{1,\ldots,s-1\}$, and, thus, $\sum_{p} x^{(p;q),(s)} = \widetilde{x}^{(q),(s)}$.

Finally, we compare the evaluations of the right-hand side of the current stage.
We compute $D^{(q),(s)}$ based on $\widetilde{x}^{(q),(s)}$ and set
\begin{align*}
    k^{(p;q),(s)} = B\, D^{(q),(s)} x^{(p;q),(s)}.
\end{align*}
The Lagrangian system computes rates based on its internal aggregations $\sum_p x_{\text{lagr}}^{(p;q),(s)}$, for which we have equality to $\widetilde{x}^{(q),(s)}$, and applies them to its state $x_{\text{lagr}}^{(p;q),(s)}$:
\begin{align*}
    k_{\text{lagr}}^{(p;q),(s)}
    = B\, D\Bigl(\widetilde{x}^{(q),(s)}, t_a + c_s h\Bigr) x_{\text{lagr}}^{(p;q),(s)}
    = B\, D^{(q),(s)} x^{(p;q),(s)}.
\end{align*}
Thus, $k_{\text{lagr}}^{(p;q),(s)} = k^{(p;q),(s)}$.
Since all stage derivatives are identical, the final update $x^{(p;q)}(t_{a}+h)$ in~\cref{eq:rk_final_update} is identical to the update of the standard Lagrangian formulation.
\end{proof}

While Definition~\ref{def:adaptedrk} provides a general scheme to compute states of traveling subpopulations, \cref{thm:pure_outflow} implies that for compartments without inflows the stage-aligned computation can be replaced by a simple algebraic scaling without introducing errors associated with the computation steps. This allows for a combined numerical strategy where the stage-aligned computations are avoided for compartments without inflow. Substituting the Runge-Kutta updates with this simple scaling for compartments without inflows preserves the identical numerical solution.

\subsection{Comparison of computational complexity}

The standard RK discretization of the Lagrangian formulation (cf.~\cref{eq:MasterEquation_wo_mob}) requires solving a global ODE system whose dimension grows quadratically with the number of patches in densely connected networks. In contrast, the proposed stage-aligned Runge-Kutta scheme, embedded in the graph structure for aggregated dynamics, avoids evaluating the expensive right-hand side of the full Lagrangian system for all traveler subpopulations. Instead, the Runge-Kutta stages are computed only for the aggregated patch states, while traveler states are updated through algebraic transformations aligned with the Runge-Kutta stages. This reduces the dimension of the ODE integration from $\mathcal{O}(N_P^2)$ to $\mathcal{O}(N_P)$, while shifting the $\mathcal{O}(N_P^2)$ scaling entirely to these simple and efficient algebraic updates.

The proposed approach possesses the same scaling characteristic as the previously introduced auxiliary Euler heuristic~\cite{kuhn_assessment_2021}. That method evaluates the right-hand side only for aggregated patch states and updates traveler states through auxiliary Euler steps, resulting in an identical split of linear ODE scaling and quadratic scaling with respect to traveler-state computations. However, the auxiliary Euler approach provides only an approximation to the numerical solution produced by the standard Lagrangian formulation, whereas the proposed stage-aligned scheme remains consistent with the underlying Runge-Kutta discretization.

A summary of the resulting per-step computational complexities is given in \cref{tab:costs}.

\begin{table}[H]
  \centering
  \caption{\textbf{Per-step computational complexity in a dense network with $N_P$ patches.}}  
  \begin{tabular}{@{}lll@{}}
    \toprule
    \textbf{Method} & \textbf{Global ODE system} & \textbf{Traveler-state computation}\\
    \midrule
    Standard Lagrangian & $\mathcal{O}(N_P^2)$ & -- (integrated in global system) \\
    Auxiliary Euler (w/ Graph) & $\mathcal{O}(N_P)$ & $\mathcal{O}(N_P^2)$ (auxiliary explicit Euler step) \\
    Stage-aligned (w/ Graph)& $\mathcal{O}(N_P)$ & $\mathcal{O}(N_P^2)$ (algebraic computation inside Runge-Kutta) \\
    \bottomrule
  \end{tabular}
  \label{tab:costs}  
\end{table}

\section{Numerical results}\label{sec:numerical_results}

To assess the proposed stage-aligned RK method, we study a widely used compartmental model. We compare the accuracy of the standard Lagrangian RK method, the auxiliary Euler approach~\cite{kuhn_assessment_2021}, the proposed stage-aligned RK method, and a hybrid adaptation where, independent of the RK scheme for the aggregated system, traveler states are obtained by a simple algebraic scaling based on the initial traveler share. Furthermore, we assess the computation times of these approaches as a function of the number of compartments and patches.

\subsection{Description of the considered model}

We use the different numerical schemes to study an SEIR-type metapopulation model. We consider age-specific dynamics, and use $i \in \{1,\dots,N_{G}\}$ to index the $N_{G}$ distinct age groups. To describe the local transition dynamics, we drop the patch index for clarity. The SEIR equations for age group $i$ are given by
\begin{align}
\begin{aligned}
\label{eq:basicSEIR}
  \frac{\d S_i(t)}{\d t} &= -\,\lambda_i(t)\,S_i(t), \\
  \frac{\d E_i(t)}{\d t} &= \lambda_i(t)\,S_i(t)\;-\;\frac{1}{T_{E_i}}\,E_i(t), \\
  \frac{\d I_i(t)}{\d t} &= \frac{1}{T_{E_i}}\,E_i(t)\;-\;\frac{1}{T_{I_i}}\,I_i(t), \\
  \frac{\d R_i(t)}{\d t} &= \frac{1}{T_{I_i}}\,I_i(t).
\end{aligned}
\end{align}
Here, $S_i(t)$, $E_i(t)$, $I_i(t)$, and $R_i(t)$ denote the corresponding compartment sizes of age group $i$ at time $t$ and $N_i(t) = S_i(t) + E_i(t) + I_i(t) + R_i(t)$ is the total population of age group $i$. The parameters $T_{E_i}$ and $T_{I_i}$ denote the average latency and infectious periods, respectively. The force of infection $\lambda_i(t)$ for age group $i$ is defined as
\begin{align*}
  \lambda_i(t) =\rho_i\, \sum_{j=1}^{N_{G}} \phi_{i,j}(t)\,\frac{I_j(t)}{N_j(t)},
\end{align*}
where $\rho_i \in [0,1]$ for $i=1,\dots,N_G$ denotes the transmission probability per contact for age group $i$, and $\phi(t)\in\mathbb{R}^{N_G \times N_G}_{\geq 0}$ is the contact matrix with entries $\phi_{ij}(t)$ for $i,j=1,\dots,N_G$ representing the average number of daily contacts that an individual of age group $i$ has with individuals of age group $j$. The number of compartments in the resulting model is $N_C=4N_G$. 

\subsection{Limitations of traveler computation with auxiliary Euler heuristic}
\label{sec:euler_approximation_traveler}

The previously suggested heuristic~\cite{kuhn_assessment_2021} updates traveler states through an auxiliary explicit Euler step that is decoupled from the patch ODE solver. This method updates each subpopulation currently present in a patch by a single step using patch-level quantities (e.g., force of infection) computed from the aggregated compartments at the beginning of the step (cf.~\cref{eq:auxeuler}) introducing specific numerical limitations.

In addition to the limited convergence order of this auxiliary Euler method, the auxiliary Euler step approach can lead to overshooting in traveler updates. Overshooting approximations result in negative values for the remaining local population in the patch, meaning that the updated subpopulation state exceeds the simulated aggregated total, i.e., $\widetilde{x}^{(q)}_j(t_{a+1})-x^{(p;q)}_j(t_{a+1})<0$ for at least one compartment $j=1,\ldots,N_C$. This issue requires heuristic corrections, such as subtracting from the largest compartment instead, and it becomes particularly likely when traveler shares in a specific compartment are high in relation to the local population.

\begin{figure}[H]
  \centering
  \includegraphics[width=\textwidth]{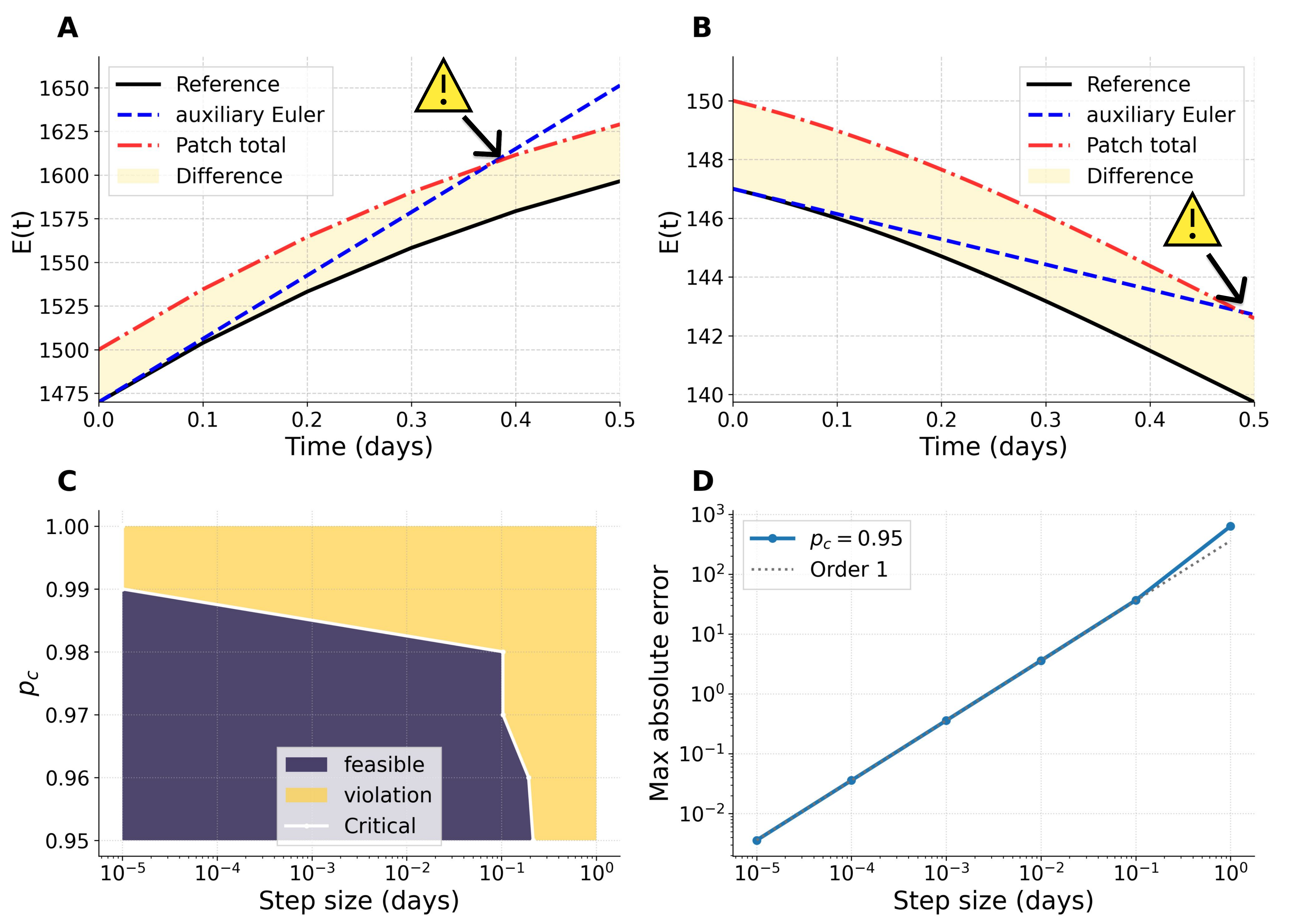}
  \caption{\textbf{Implications for subpopulation updates through the auxiliary Euler step as in~\cite{kuhn_assessment_2021}.}
    \textbf{(A–B)} One-step tests demonstrating the evolution of the exposed subpopulation $E(t)$ under two different initial conditions ($t_{\max}=h = 0.5$\,days, $p_{c,S}=p_{c,E}=p_{c,I}=p_{c,R}=0.98$): the solid black curve is the subpopulation reference obtained from the standard Lagrangian model with a fine step size; the blue dashed curve is the single-step auxiliary Euler update; the red dash-dotted curve is the simulated patch total. Yellow shading highlights the gap between the subpopulation and the total, and warning icons indicate Euler overshoot of the patch total.
    \textbf{(C)} Feasibility map across a uniform traveler fraction $p_c$ (applied equally to S,E,I,R) and step size $h$. Dark cells indicate fewer than $2\%$ violating time points (negativity or overshoot). Yellow cells indicate at least $2\%$ violating time points.
    \textbf{(D)} Convergence at $p_c=0.95$: The maximum error over time and all compartments decreases approximately linearly with~$h$, consistent with first-order behavior (grey dotted slope-1 guideline). Setups are listed in~\cref{tab:setups_params}.}
  \label{fig:euler_compare}
\end{figure}

For the parameter and population values listed in~\cref{tab:setups_params}, \cref{fig:euler_compare} demonstrates this overshooting problem. Short-horizon tests with $t_{\max}=h=0.5$ and large traveling fractions ($p_{c,S}=p_{c,E}=p_{c,I}=p_{c,R}=0.98$) directly lead to overshooting approximations (\cref{fig:euler_compare}A,B). Note that while traveler shares of more than 90\,\% may seem exceptionally large, they can easily occur if a specific compartment in a destination patch is small compared to the incoming traveler volume.
The feasibility map across a grid of initial fractions and step sizes shows that admissible step sizes decrease rapidly as $p_c\to 1$, whereas moderate traveler fractions allow substantially larger~$h$ without requiring heuristic correction (\cref{fig:euler_compare}C). Finally, compared to a high-accuracy reference solution ($h=10^{-6}$) based on the standard Lagrangian approach, the maximum error of the heuristic approach decreases approximately linearly in~$h$, indicating strict first-order convergence (\cref{fig:euler_compare}D).

\subsection{Numerical validation}
\label{sec:numerical_val}

To validate our findings numerically, we use the standard Lagrangian model from~\cref{eq:MasterEquation_wo_mob} as the ground truth and compare it against the proposed stage-aligned approach. We consider a scenario with a resident population that always stays in Patch~$p$ and one additional group from Patch~$q$, $q\neq p$, that arrives in Patch $p$ at $t=0$ and stays for the entire simulation horizon of $t_{\max}=100$ days. The local dynamics follow the SEIR model (\cref{eq:basicSEIR}) with a single age group ($N_{G}=1$). The initial populations and model parameters are listed in~\cref{tab:setups_params}.

We validate the theory numerically with four explicit Runge-Kutta methods of orders 1, 2, 3, and 4. The specific Butcher tableaus for these methods are given in~\ref{sec:appendix_tableau}. To avoid naming confusion with the auxiliary Euler heuristic, we refer to the explicit Euler integrator as RK-1. As an additional comparison, we consider a hybrid scheme that solves the patch-aggregated system using the high-order RK-4 method but updates traveler states using a simple RK-1 step. This approach is a simple improvement over the heuristic approach of~\cite{kuhn_assessment_2021}; the hybrid approach prevents overshooting by design but does not guarantee traveler state computations as in the standard Lagrangian formulation. \cref{fig:verification} summarizes the findings.

\begin{figure}[!t]
  \centering
  \includegraphics[width=\textwidth]{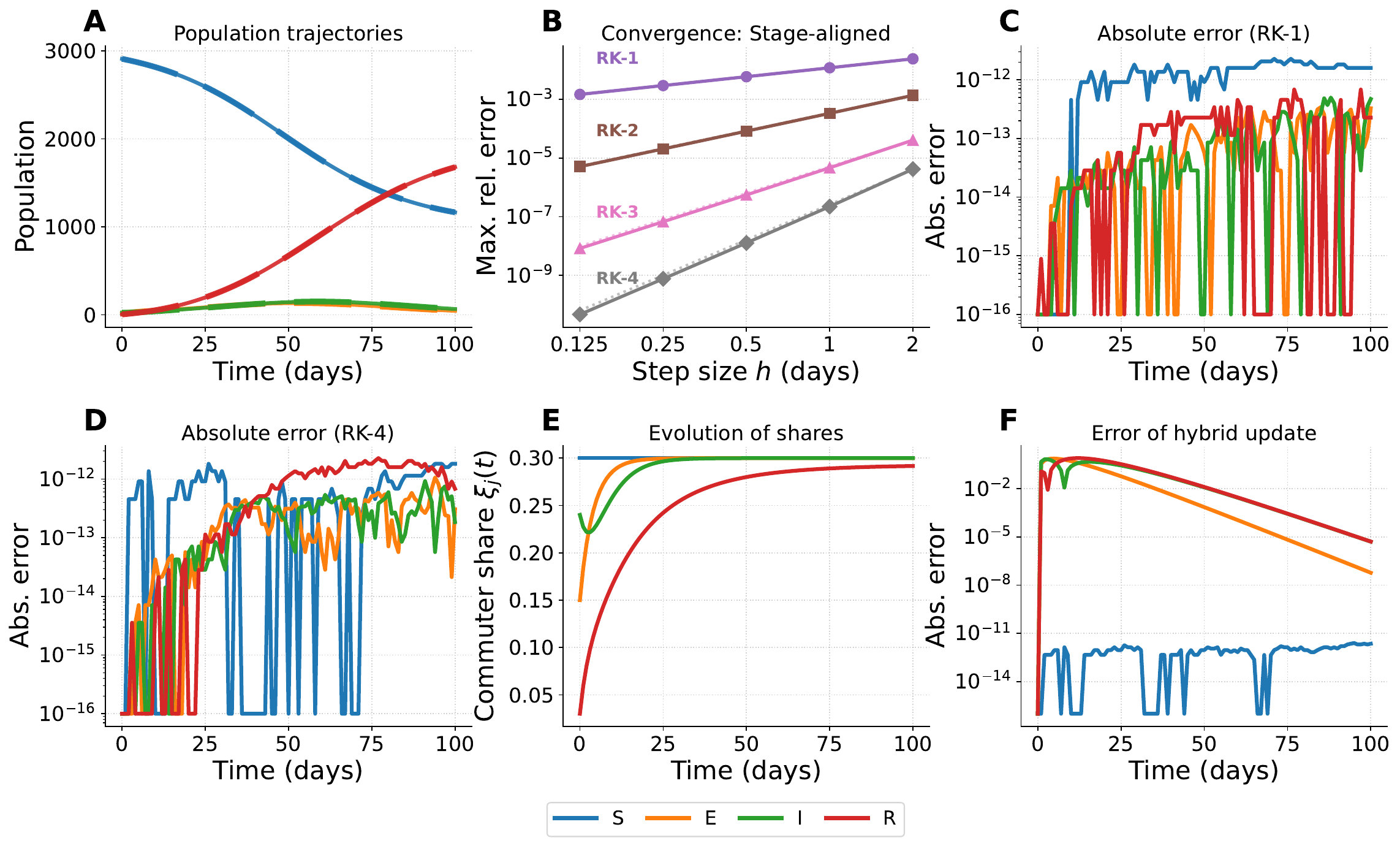}
  \caption{\textbf{Validation of the traveler state computation scheme with convergence plots for different Runge-Kutta methods.}
  \textbf{(A)} Time series of total population dynamics for compartments \(S,E,I,R\) over 100 days (stage-aligned with RK-1 ($h=1$) in solid lines; standard Lagrangian with RK-1 ($h=1$) in dashed lines). 
  \textbf{(B)} Convergence of the maximum relative error for the stage-aligned scheme (RK-1, RK-2, RK-3, and RK-4) using different step sizes $h$. The reference solution is computed with the standard Lagrangian approach using the RK-4 integrator and a step size of $h = 10^{-6}$. Each dotted line shows the theoretical convergence order.
  \textbf{(C)} Absolute error over time for each compartment \(S,E,I,R\) comparing the solution from the stage-aligned approach to the standard Lagrangian solution (both using RK-1 with $h=1$).
  \textbf{(D)} Absolute error over time for each compartment \(S,E,I,R\) comparing the solution from the stage-aligned approach to the standard Lagrangian solution (both using RK-4 with $h=1$).
  \textbf{(E)} Evolution of the traveler shares $\xi^{(p;q)}_j(t)$ for each compartment over time computed using the stage-aligned solution of order 1 as in (A).
  \textbf{(F)} Absolute error over time for each compartment \(S,E,I,R\) when comparing the hybrid approach (RK-4 for the aggregated system and RK-1 for the computation of traveler states) to the standard Lagrangian integrator (RK-4). Here, the step size is fixed at $h=1.0$.}
  \label{fig:verification}
\end{figure}

The stage-aligned approach with a RK-1 scheme yields trajectories indistinguishable from the standard Lagrangian reference model solved with the same method~(\cref{fig:verification}A).

To evaluate the error convergence for the four explicit Runge-Kutta methods, we computed a high-precision reference solution using the standard Lagrangian model with a small step size of $h=10^{-6}$. With decreasing step sizes, the proposed approach converges with its corresponding theoretical order $q \in \{1, 2, 3, 4\}$~(\cref{fig:verification}B). The dotted lines indicate the optimal convergence order, which are even perfectly covered for methods of first and second order. 

Beyond consistency in convergence order, the absolute errors show that the numerical solutions of the traveler groups in the standard Lagrangian approach and the stage-aligned computation are identical up to rounding errors. For both the RK-1 and the RK-4 scheme, the absolute errors between the numerical solutions are close to the rounding error in the beginning and, while accumulating slightly over time, always stay within a range of $10^{-16}$ to $10^{-12}$, thus confirming our findings~(\cref{fig:verification}C,D).

We furthermore observe that the share $\xi^{(p;q)}_j(t)$ (see~\cref{eq:shares}) for the susceptible compartment stays constant~(\cref{fig:verification}E). As no further traveling events occur during the simulation horizon, this compartment without inflows behaves exactly in line with our findings from~\cref{thm:pure_outflow}. Driven by the application, all shares stabilize as the epidemic dynamics approach herd immunity. In contrast, the absolute error of the hybrid strategy (RK-4 for the solution of the aggregated system and an RK-1 step for traveler state computation) is not in the range of the rounding error for compartments with both in- and outflows~(\cref{fig:verification}F).

Summarized, our experiments confirm the theoretical findings. The stage-aligned approach reliably computes traveler state evolutions with the convergence order of the Runge-Kutta scheme used to solve the patch-aggregated formulations. For compartments without inflows (like $S$), the stage-aligned update can be simplified to a direct update with the initial share value without losing accuracy.

\subsection{Assessment of computational efficiency}
\label{sec:numerical_stagealigned}

\begin{figure}[!t]
\centering
\includegraphics[width=1.0\textwidth]{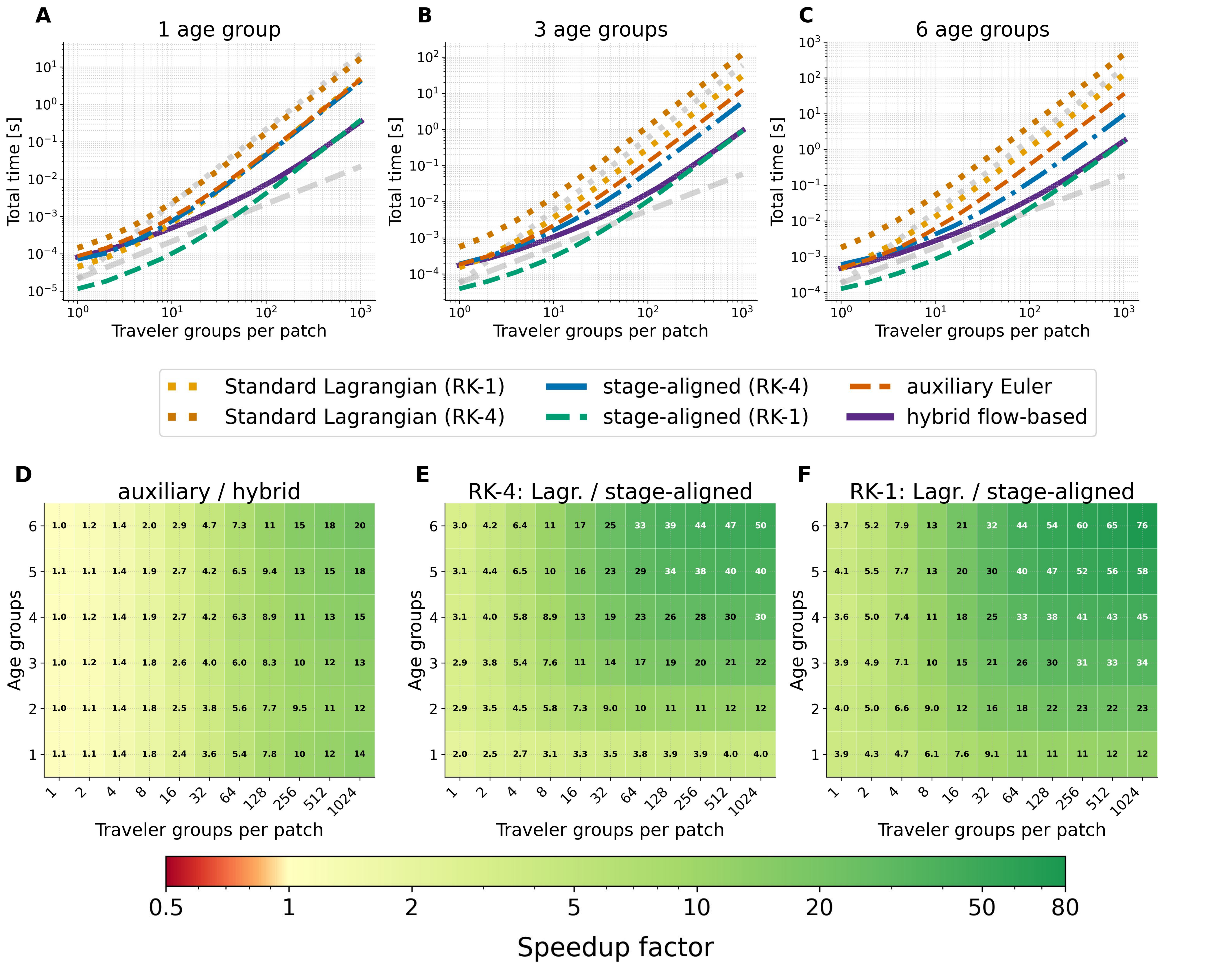}
\caption{\textbf{Computational scaling and speedups for traveler updates using a SEIR model (see~\cref{eq:basicSEIR}) with a step size of $h=0.5$\,days for $50$\,days.}
\textbf{(A–C)} Median runtime per simulation (in seconds) against the number of patches for $1$, $3$, and $6$ age groups.
Lines show the different strategies (standard Lagrangian with RK-1 and RK-4, stage-aligned with RK-1 and RK-4, auxiliary Euler heuristic, hybrid approach) as explained in the legend. Grey reference lines indicate linear $\mathcal{O}(N_P)$ (dashed) and quadratic $\mathcal{O}(N_P^2)$ (dotted) trends.
\textbf{(D–F)} Speedup heatmaps across patches (rows) and age groups (columns).
Each cell displays the ratio of median runtimes for the indicated pair: \textbf{(D)} auxiliary Euler heuristic / hybrid approach, \textbf{(E)} standard Lagrangian (RK-4) / stage-aligned (RK-4), \textbf{(F)} Standard Lagrangian (RK-1) / stage-aligned (RK-1).
The color scale is logarithmic with a break at 1 (yellow to green~$>$~1: suggested approach faster; red to orange~$<$~1: existing approach faster).
Timings are Google Benchmark medians reported as seconds per simulated day. For the parameters and populations used in these simulations see~\cref{tab:setups_params}.}
\label{fig:benchmark_ags}
\end{figure}

As a final assessment, we evaluate the computational efficiency of the proposed stage-aligned approach. All numerical schemes were implemented in C++ as part of the MEmilio framework~\cite{bicker2026memilio,Bicker_MEmilio_v2_1_0}. Benchmarks were executed on an Intel Xeon ``Skylake'' Gold 6132 (2.60~GHz) using four nodes with 14 CPU cores each and 384~GB DDR4 memory. We compare four distinct strategies: i) the standard Lagrangian formulation (solved with RK-1 and RK-4 methods), ii) the novel stage-aligned approach (with RK-1 and RK-4 methods), iii) the auxiliary Euler heuristic introduced in~\cite{kuhn_assessment_2021} (with RK-4 for the aggregated system and an external auxiliary Euler step for travelers), and iv) the hybrid approach (with RK-4 for the aggregated system and an RK-1 step for the traveler state computation). 

All simulations use the SEIR model defined in~\cref{eq:basicSEIR}, solved numerically with a fixed step size of $h=0.5$ days on a simulation horizon of $50$ days. As in the validation scenario, traveling subpopulations arrive at their destination patches at $t=0$ and stay there for the entire simulation horizon. The parameters and initial conditions are presented in~\cref{tab:setups_params}, with the total population distributed uniformly across the age groups. We vary the problem size by increasing the number of traveler groups per patch, which corresponds to $N_P-1$, and the number of demographic stratifications ($N_G$) in a fully connected setting and report Google Benchmark medians as seconds per simulated day.

The assessment of the computation times (\cref{fig:benchmark_ags}A–C) shows that RK-1 and RK-4 for standard Lagrangian models scale quadratically with the number of patches and linearly with the number of demographic stratifications ($N_G$ with $N_C=4N_G$). This scaling behavior is driven by the number of state variables and the complexity of the numerical scheme. Since the computational cost of explicit Runge-Kutta methods is linear with respect to the number of ODEs, the solver runtime scales directly with the $\mathcal{O}(N_P^2 N_C)$ system dimension, which is confirmed by our benchmark results.

The stage-aligned realizations partially decouple the ODE integration dimension from the network connectivity, solving only for the patch-aggregated compartments in a system size of $\mathcal{O}(N_P N_C)$. Our results demonstrate that the remaining algebraic updates for the $\mathcal{O}(N_P^2 N_C)$ traveler groups are computationally cheap compared to the expensive evaluations of the coupled right-hand sides (e.g., force of infection) required by the ODE solver. The total computational complexity is thus composed of a linear term for ODE integration and a quadratic term for algebraic computation.
In the observed range, the linear cost of integration dominates the runtime for small to medium network sizes, while at larger network sizes (e.g., $N_P>100$), the quadratic cost of the traveler state computation becomes more visible, leading to a transition from linear to quadratic scaling behavior, however, with substantially reduced runtime with respect to the standard Lagrangian approach.
Comparing the efficiency of the stage-aligned and hybrid strategies reveals distinct performance characteristics. For larger network sizes, the hybrid approach and the stage-aligned RK-1 approach are the fastest. As the number of patches increases towards $1025$ (with 1024 local travel connections), the runtimes of the stage-aligned RK-1 and hybrid variants align. This indicates that for very large networks, the computational load shifts towards the algebraic computation steps shared by both methods, thereby decreasing the relative weight of the ODE integrator. The stage-aligned RK-4 method incurs a moderate overhead due to the multiple internal stages required for higher-order accuracy, while the stage-aligned Euler variant proves to be the fastest strategy overall. Nevertheless, the stage-aligned RK-4 method matches or even exceeds the performance of the auxiliary Euler heuristic -- particularly for higher stratifications ($N_G=6$) -- while inherently preventing overshooting and ensuring higher order numerical approximations. 

Quantifying the relative speedups reveals that the novel methods outperform the previously available approaches in 194 out of 198 configurations (with parity only in two-patch setups with a single traveling subpopulation per patch). For larger networks, the performance improvements of the novel methods becomes substantial.
Comparing the auxiliary Euler heuristic to the hybrid approach shows that the hybrid approach is up to $20$ times faster~(\cref{fig:benchmark_ags}D), while additionally preventing overshooting and being exact for compartments without inflows.
Furthermore, the stage-aligned RK-4 variant yields massive performance gains over the standard Lagrangian RK-4 model, being up to $50$ times faster at the largest scale, while obtaining the same numerical result~(\cref{fig:benchmark_ags}E).
Similarly, the stage-aligned RK-1 variant is up to $76$ times faster than the standard Lagrangian model solved with a RK-1 method~(\cref{fig:benchmark_ags}F). These advantages are driven by the piecewise-continuous formulation limiting the ODE system size to $\mathcal{O}(N_P\,N_C)$, while the traveler states are updated via a series of efficient algebraic operations.

While the hybrid approach outperforms the prior heuristic method and prevents overshooting by design, it trades performance against accuracy by only using a first-order scheme for the travelers and, thus, potentially also affecting non-traveler accuracy. When first-order accuracy is sufficient, the stage-aligned RK-1 method should be preferred. If accuracy beyond first order is required, the stage-aligned variant with Runge-Kutta schemes of order 2, 3, or more can be chosen easily.

The absolute runtime for the standard Lagrangian model using the RK-4 integrator for the considered simulation scenario takes approximately $472$ seconds for the largest network (1025 patches and six age groups). In direct comparison, the stage-aligned RK-4 variant requires only $9.5$ seconds for the same simulation. While the single-run time of the standard model might appear manageable, the practical implications become evident in validation and calibration tasks. Modeling often requires Bayesian parameter inference, sensitivity analysis, or ensemble forecasting involving tens of thousands of model evaluations.

\section{Conclusion}
\label{sec:conclusion}

In this work, we presented a numerically efficient method for computing traveler states in a metapopulation setting. We considered the limit case of a Lagrangian formulation with instantaneous mobility exchanges at arbitrary, user-defined mobility event timings and a recently suggest graph-based approach. By solving the dynamics of the patch-aggregated systems, the dimension of the global ODE system is reduced substantially when compared of the standard Lagrangian setting, i.e., it only scales linearly with the number of patches, while, in a densely or fully connected network, the standard Lagrangian formulation has quadratic scaling. The computation of traveler states still scales quadratically with the number of patches, but the associated algebraic updates are much less costly. Reusing relevant function evaluations from the Runge-Kutta stages of the aggregated solution and aligning the traveler state computation with these stages avoids a large number of numerical operations. With these updates, our approach resolves issues of the previously suggested heuristic for traveler state estimation. We proved that the numerical solution obtained from the novel stage-aligned approach is identical to the solution of the standard Lagrangian formulation if the same Runge-Kutta methods are used. In particular, our novel method inherits the convergence order of the used Runge-Kutta scheme. For compartments without inflows, we showed that we can furthermore simplify the computation scheme and obtain the identical solution by adequate rescaling of the aggregated solution. The proposed method is well suited for a broad class of metapopulation models with explicit mobility (i.e., where individuals are physically moved between patches rather than just implicitly coupled via contact networks) that admit a Lagrangian or Lagrangian-type formulation. Our construction is globally Lagrangian, while locally it computes state transitions based on patch-aggregated dynamics.

In our benchmarks, we showed that the novel stage-aligned approach substantially outperforms standard Runge-Kutta methods for the Lagrangian formulation for models with demographic stratification into one, three, and six age groups and increasing numbers of patches between 2 and 1025. Although the quadratic traveler state computation eventually dominates the runtime for very large networks in the novel formulation, it remains computationally much lighter than the standard Lagrangian approach. For a network with $1025$ patches, i.e., 1024 local traveler connections, and six age groups, we obtain a speedup of up to $50$-fold for the stage-aligned approach using a RK-4 scheme and up to $76$-fold using a RK-1 scheme relative to the standard Lagrangian formulation. Even for networks with 65 and 257 patches, the performance gain is substantial, reaching factors of approximately $33$ to $44$ for RK-4 and $44$ to $60$ for RK-1, respectively.

Overall, the proposed stage-aligned formulation enables accurate and scalable simulation of metapopulation dynamics with explicit mobility. It preserves the numerical properties of the underlying Runge–Kutta discretization while substantially reducing the computational burden relative to the standard Lagrangian formulation. This considerably broadens the range of large-scale applications for which Lagrangian-type models become computationally feasible.

\appendix

\section{Initialization SEIR model}
\label{sec:appendix_init_seir}

\FloatBarrier
\begin{table*}[!h]
\small
\centering
\caption{\textbf{Overview of model and setup parameters across the numerical subsections.} In case of more than one age group, the total population is distributed uniformly across the groups.}
\label{tab:setups_params}
\setlength{\tabcolsep}{3pt}
\renewcommand{\arraystretch}{1.08}
\begin{tabularx}{\textwidth}{L{1.6cm} D V V V V}
\toprule
\multirow{3}{*}{\textbf{Symbol}} & \multirow{3}{*}{\textbf{Description}} &
\multicolumn{2}{c}{\textbf{Comparison to prior work}} &
\multicolumn{1}{c}{\textbf{Verification}} &
\multicolumn{1}{c}{\textbf{Performance}} \\
\cmidrule(lr){2-3}\cmidrule(lr){4-4}\cmidrule(lr){5-6}
& & \multicolumn{2}{c}{\cref{sec:euler_approximation_traveler}}  & \cref{sec:numerical_val} & \cref{sec:numerical_stagealigned} \\
& & \cref{fig:euler_compare}A & \cref{fig:euler_compare}B &  \cref{fig:verification} &\cref{fig:benchmark_ags} \\
\midrule
$T_E$  & Latency period (days) &  1.0 & 1.0 & 5.2 & 5.2 \\
\rowrule
$T_I$  & Infectious period (days) &  1.0 & 1.0 & 6.0 & 6.0 \\
\rowrule
$\rho$ & Transmission probability per contact &  0.1 & 0.1 & 0.1 & 0.1 \\
\rowrule
$\phi$ & Mean daily contacts (contact matrix) & 2.7 & 2.7 & 2.7  & $\mathbf{1}_{N_G \times N_G}$ \\
\rowrule
$t_{\max}$  & Simulation end time (days) & 0.5 & 0.5 &  100 &  50 \\
\rowrule
$h$  & Step size & 0.5 & 0.5 & \{2.0, 1.0, 0.5, 0.25, 0.125\} & 0.5 \\
\rowrule
$N_{G}$ & Number of age groups & 1 & 1 & 1  & \(\{1,2,3,4,5,6\}\)\\
\rowrule
$N_P$ & Number of patches  & 1 & 1 & 1  &\(\{2,3,5,\dots,1025\}\) \\
\rowrule
$N_c$ & Number of local traveler groups& 1 & 1 & 1  & \(\{1,2,4,\dots,1024\}\) \\
\rowrule
$S_0$ & Initial susceptible (total) & 5000 & 9350 & 9700  & 9700 \\
\rowrule
$E_0$ & Initial exposed (total) & 1500 & 150 & 100  &100 \\
\rowrule
$I_0$ & Initial infected (total) & 1500 & 120 & 100  & 100 \\
\rowrule
$R_0$ & Initial recovered (total) & 2000 & 180 & 100  & 100 \\
\rowrule
$p_{c,S}$ & Fraction of initial susceptible travelers &  0.98 & 0.98 & 0.3 &  $0.1 \times N_C$ \\
\rowrule
$p_{c,E}$ & Fraction of initial exposed travelers &  0.98 & 0.98 & 0.15 & $0.1 \times N_C$ \\
\rowrule
$p_{c,I}$ & Fraction of initial infected travelers &  0.98 & 0.98 & 0.24 &  $0.1 \times N_C$ \\
\rowrule
$p_{c,R}$ & Fraction of initial recovered travelers & 0.98 & 0.98 &  0.03 &   $0.1 \times N_C$ \\
\bottomrule
\end{tabularx}
\end{table*}

 \FloatBarrier
\section{Butcher Tableaus of the Numerical Solvers}
\label{sec:appendix_tableau}
 \FloatBarrier
For the numerical validation in~\cref{sec:numerical_val}, we utilized the following explicit Runge-Kutta methods defined by their Butcher tableaus $(c, A, b^T)$:

\begin{table}[!h]
    \centering
    \begin{minipage}{0.45\textwidth}
        \centering
        \textbf{RK-1 (Explicit Euler)} \\[0.2cm]
        \begin{tabular}{c|c}
            0 & 0 \\
            \hline
              & 1
        \end{tabular}
    \end{minipage}
    \hfill
    \begin{minipage}{0.45\textwidth}
        \centering
        \textbf{RK-2 (Midpoint method)} \\[0.2cm]
        \begin{tabular}{c|cc}
            0   & 0   & 0 \\
            1/2 & 1/2 & 0 \\
            \hline
                & 0   & 1
        \end{tabular}
    \end{minipage}
    
    \vspace{1cm}
    
    \begin{minipage}{0.45\textwidth}
        \centering
        \textbf{RK-3} \\[0.2cm]
        \begin{tabular}{c|ccc}
            0   & 0   & 0   & 0 \\
            1/2 & 1/2 & 0   & 0 \\
            1   & -1  & 2   & 0 \\
            \hline
                & 1/6 & 4/6 & 1/6
        \end{tabular}
    \end{minipage}
    \hfill
    \begin{minipage}{0.45\textwidth}
        \centering
        \textbf{RK-4} \\[0.2cm]
        \begin{tabular}{c|cccc}
            0   & 0   & 0   & 0 & 0 \\
            1/2 & 1/2 & 0   & 0 & 0 \\
            1/2 & 0   & 1/2 & 0 & 0 \\
            1   & 0   & 0   & 1 & 0 \\
            \hline
                & 1/6 & 1/3 & 1/3 & 1/6
        \end{tabular}
    \end{minipage}
\end{table}
 \FloatBarrier

\section*{Acknowledgements}
This work was supported by the Initiative and Networking Fund of the Helmholtz Association (grant agreement number KA1-Co-08, Project LOKI-Pandemics). It was furthermore supported by the German Federal Ministry of Education and Research and the German Federal Ministry of Research, Technology and Space under grant agreement 031L0325A (Project TwinChain) and the Deutsche Forschungsgemeinschaft (DFG, German Research Foundation) (grant agreement 528702961). TwinChain is part of the Modeling Network for Severe Infectious Diseases (MONID). Additionally, this work was supported by the European Union via the ERC grant INTEGRATE, grant agreement number 101126146, and under Germany’s Excellence Strategy by the Deutsche Forschungsgemeinschaft (DFG, German Research Foundation) (EXC 2047—390685813, EXC 2151—390873048, and 524747443), the University of Bonn via the Schlegel Professorship of J.H. 

\section*{Competing interests}
The authors declare that they have no known competing financial interests or personal relationships that could have appeared to influence the work reported in this paper.

\section*{Data availability}
All simulations were implemented and performed using the MEmilio framework \cite{bicker2026memilio,Bicker_MEmilio_v2_1_0}. The specific simulation scenarios and plotting scripts are accessible at \url{https://github.com/SciCompMod/memilio-simulations}.

\section*{CRediT authorship contribution statement}
\begin{itemize}
    \item[] \textbf{Henrik Zunker:} Conceptualization, Data curation, Formal analysis, Investigation, Methodology, Software, Validation, Visualization, Writing – original draft, Writing – review \& editing 
    \item[] \textbf{Ren\'e Schmieding:} Conceptualization, Software, Validation, Writing – review \& editing 
    \item[] \textbf{Jan Hasenauer:} Conceptualization, Formal analysis, Funding acquisition, Methodology, Project administration, Resources, Supervision, Validation, Writing – review \& editing
    \item[] \textbf{Martin J. Kühn:} Conceptualization, Formal analysis, Funding acquisition, Investigation, Methodology, Project administration, Resources, Supervision, Validation, Writing – original draft, Writing – review \& editing 
\end{itemize}

\bibliographystyle{elsarticle-num}
\bibliography{literature.bib}

\end{document}